%
\documentclass{amsart}
\usepackage{extarrows}
\usepackage{tikz-cd}
\usetikzlibrary{matrix}

\newtheorem{theorem}{Theorem}[section]
\newtheorem*{theorem*}{Theorem}

 \newtheorem{corollary}[theorem]{Corollary}
 \newtheorem*{corollary*}{Corollary}
 
 \newtheorem{proposition}[theorem]{Proposition}
 \newtheorem*{mainthm*}{Main Theorem}

\theoremstyle{definition}

\theoremstyle{remark}
\newtheorem{remark}[theorem]{Remark}


\begin{document}

\title[Lower bounds for the volume]%
{Lower Bounds for the Volume with Upper Bounds for the Ricci Curvature in Dimension Three}
\author{V.  Gimeno}
\subjclass[2010]{53C20 (primary), 53C22  (secondary)}

\thanks{Research partially supported by the Spanish Government Ministerio de Econom\'ia y Competitividad  (MINECO-FEDER),  grant MTM2017-84851-C2-2-P, and by Universitat Jaume I, grant UJI-B2018-35.}

\maketitle

\begin{abstract}
In this note we provide several lower bounds for the volume of  a geodesic ball within the injectivity radius in a $3$-dimensional Riemannian manifold assuming only upper bounds for the Ricci curvature.
\end{abstract}


\section{Introduction} 
\label{sec:intro}
One of the central topics in Riemannian geometry is the relation between  the curvature of a Riemannian metric defined on a manifold and the behavior of the volume of geodesic balls. Curvature, geodesics and balls have an extremely rich relationship.  A celebrated and well known result  states (see for instance \cite{Chavel2}) that if a $n$-dimensional Riemannian manifold $(M,g)$ has the sectional curvatures ${\rm sec}_M(\Pi)$  of any tangent plane $\Pi$  bounded from above by a constant $\kappa$,
$$
{\rm sec}_M(\Pi)\leq \kappa 
$$
then, for any point $p\in M$, the volume ${\rm V}(p,t)$ of the geodesic ball of radius $t$ centered at $p$ is bounded from below by
\begin{equation}\label{Bishop-Gunter}
\displaystyle{\rm V}_M(p,t)\geq {\rm V}_{\mathbb{M}_\kappa^n}(t)
\end{equation}
for any $t\leq \min\{{\rm inj}(p),\pi/\sqrt{\kappa}\}$\footnote{In this paper is understood $\pi/\sqrt{\kappa}=+\infty$ when $\kappa\leq 0$}, where  ${\rm V}_{\mathbb{M}_\kappa^n}(t)$ is the volume of the geodesic ball of radius $t$ in the simply-connected real space form $\mathbb{M}_\kappa^n$ of dimension $n$ and constant sectional curvature $\kappa$.  This inequality was obtained by Bishop and G\"unter and it has associated a rigidity result: if equality is attained in inequality (\ref{Bishop-Gunter}), the geodesic ball of radius $t$ in $M$ centered at $p\in M$ is isometric to the geodesic ball of radius $t$ in $\mathbb{M}_\kappa^n$.
 
An other classical result authored by Bishop and Gromov,  (see \cite{Chavel2}) states that whenever the Ricci curvatures are bounded from below by
$$
{\rm Ric}\geq (n-1)\kappa,
$$
the volume of the geodesic ball of radius $t$ is bounded by from above by
$$
\displaystyle{\rm V}_M(p,t)\leq {\rm V}_{\mathbb{M}_\kappa^n}(t)
$$
for any $t>0$.

Furthermore, Calabi and Yau (see \cite{Yau1976}) proved  that for any  complete and non-compact Riemannian manifold with 
$$
{\rm Ric}\geq 0
$$ 
there exists a constant $C$ such that the volume of the geodesic ball is bounded from below by
$$
{\rm V}_M(p,t)\geq C\, t.
$$
We would like to stress here that, in the above theorems, upper bounds are imposed only on the sectional curvature, and for the Ricci curvature only lower bounds are  used. The goal of this paper is to obtain lower bounds for the volume of geodesics balls when the Ricci curvature is bounded from above. This objective is achieved in dimension $3$. The results of this paper are detailed in the following section.

\section{Main Results}\label{mainsec}

Our first result is a Bishop-G\"unter type inequality but using bounds on the Ricci curvature:
\begin{theorem}\label{teo-unua}Let $(M,g)$ be a $3$-dimensional Riemannian manifold. Suppose that
$$
{\rm Ric}\leq 2\kappa.
$$Then, for any $p\in M$ and for any $t\leq \min\{{\rm inj}(p),\pi/\sqrt{\kappa}\}$,
the volume ${\rm V}_M(p,t)$ of the geodesic ball of radius $t$ centered at $p$ is bounded from below by
\begin{equation}
\displaystyle{\rm V}_M(p,t)\geq {\rm V}_{\mathbb{M}_\kappa^3}(t),
\end{equation}
where  ${\rm V}_{\mathbb{M}_\kappa^3}(t)$ is the volume of the geodesic ball of radius $t$ in the simply-connected real space form $\mathbb{M}_\kappa^3$ of dimension $3$ and constant sectional curvature $\kappa$.
\end{theorem}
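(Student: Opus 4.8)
The plan is to work in geodesic polar coordinates centered at $p$ and to analyse the area $A(r)=\mathrm{Area}(\partial B(p,r))$ of geodesic spheres, since ${\rm V}_M(p,t)=\int_0^t A(r)\,dr$. Writing $J=\sqrt{\det g}$ for the density in polar coordinates and $m=\partial_r\log J=\Delta r$ for the mean curvature of $S_r=\partial B(p,r)$ (the trace of the shape operator $S$), one differentiates under the integral sign to obtain $A'(r)=\int_{S_r}m\,dA$ and $A''(r)=\int_{S_r}(\partial_r m+m^2)\,dA$. Taking the trace of the radial Riccati equation $\partial_r S+S^2+R_{\partial_r}=0$ and using the two-dimensional Cayley--Hamilton identity $\operatorname{tr}(S^2)=m^2-2\det S$ gives $\partial_r m+m^2=2\det S-{\rm Ric}(\partial_r,\partial_r)$, so that $A''(r)=\int_{S_r}\bigl(2\det S-{\rm Ric}(\partial_r,\partial_r)\bigr)\,dA$.

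The decisive step, and the reason the statement is confined to dimension $3$, is to evaluate $\int_{S_r}\det S\,dA$ through the Gauss equation together with Gauss--Bonnet. For $r<{\rm inj}(p)$ the sphere $S_r$ is an embedded topological $2$-sphere, and the Gauss equation for the hypersurface $S_r\subset M$ reads $K_{\mathrm{int}}={\rm sec}_M(TS_r)+\det S$, where $K_{\mathrm{int}}$ is the intrinsic Gauss curvature and ${\rm sec}_M(TS_r)$ the ambient sectional curvature of the tangent plane of $S_r$. Gauss--Bonnet gives $\int_{S_r}K_{\mathrm{int}}\,dA=2\pi\chi(S_r)=4\pi$, hence $\int_{S_r}\det S\,dA=4\pi-\int_{S_r}{\rm sec}_M(TS_r)\,dA$. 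Combining this with the pointwise identity $2\,{\rm sec}_M(TS_r)+{\rm Ric}(\partial_r,\partial_r)={\rm Ric}(e_1,e_1)+{\rm Ric}(e_2,e_2)$ for an adapted orthonormal frame $\{\partial_r,e_1,e_2\}$ collapses the expression to
\[
A''(r)=8\pi-\int_{S_r}\bigl({\rm Ric}(e_1,e_1)+{\rm Ric}(e_2,e_2)\bigr)\,dA .
\]
The hypothesis ${\rm Ric}\le2\kappa$ bounds the integrand by $4\kappa$, producing the clean differential inequality $A''(r)+4\kappa A(r)\ge8\pi$, which is an equality for the model area $A_\kappa(r)=4\pi\bigl(\sin(\sqrt\kappa\,r)/\sqrt\kappa\bigr)^2$ of $\mathbb{M}_\kappa^3$.

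It remains to pass from this inequality to the volume bound. Setting $f=A-A_\kappa$ one has $f''+4\kappa f=\rho\ge0$ with $f(0)=f'(0)=0$, since both areas equal $4\pi r^2+O(r^4)$ near the origin. Here is the \emph{main obstacle}: the inequality $f''+4\kappa f\ge0$ does \emph{not} force $f\ge0$ beyond the first conjugate point $r=\pi/(2\sqrt\kappa)$ of the operator $\tfrac{d^2}{dr^2}+4\kappa$, so a pointwise area comparison cannot by itself reach the radius $\pi/\sqrt\kappa$. The remedy is to compare the integrated quantity that the theorem actually concerns. Variation of parameters gives $f(r)=\int_0^r\frac{\sin(2\sqrt\kappa(r-s))}{2\sqrt\kappa}\,\rho(s)\,ds$, and integrating once more and interchanging the order of integration yields
\[
{\rm V}_M(p,t)-{\rm V}_{\mathbb{M}_\kappa^3}(t)=\int_0^t f(r)\,dr=\int_0^t\rho(s)\,\frac{1-\cos\bigl(2\sqrt\kappa(t-s)\bigr)}{4\kappa}\,ds .
\]
Because $1-\cos\ge0$ and $\rho\ge0$, the right-hand side is nonnegative, which is exactly the asserted inequality; the constraint ${\rm inj}(p)$ is used to guarantee that $S_r$ is smoothly embedded (for Gauss--Bonnet), and $\pi/\sqrt\kappa$ to ensure that $\int_0^t A_\kappa={\rm V}_{\mathbb{M}_\kappa^3}(t)$ is the genuine model ball volume. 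For $\kappa\le0$ the trigonometric kernels are replaced by their hyperbolic or polynomial analogues, which are again nonnegative, and in that regime even the pointwise comparison $A\ge A_\kappa$ holds.
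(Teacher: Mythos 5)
Your proof is correct, and it shares with the paper its geometric core: in both arguments the decisive dimension-three fact is that Gauss--Bonnet converts the second variation of area into ${\rm A}_M''(p,r)=8\pi-\int_{S_r(p)}\left({\rm Ric}(E_1,E_1)+{\rm Ric}(E_2,E_2)\right)d{\rm A}_g$, which the paper obtains via Lie-derivative variation formulas (Proposition \ref{prop1} and Corollary \ref{cordim3}) and you obtain via the traced Riccati equation, the planar Cayley--Hamilton identity and the Gauss equation --- equivalent derivations of the same identity. Where you genuinely diverge is the comparison step. The paper integrates up to the volume level (Proposition \ref{Gauss-Bonnet}), sets $Z={\rm V}_M(p,\cdot)-{\rm V}_{\mathbb{M}_\kappa^3}$, and runs a Sturm-type quotient argument: from $Z''\geq -4\kappa Z$ it concludes that $Z'{\rm sn}_{4\kappa}-Z{\rm sn}_{4\kappa}'$ is non-decreasing and hence that $Z/{\rm sn}_{4\kappa}$ is non-decreasing and non-negative. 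You instead stay at the area level, represent $f={\rm A}_M-{\rm A}_\kappa$ by the Duhamel kernel $\sin\left(2\sqrt{\kappa}(r-s)\right)/(2\sqrt{\kappa})$, and integrate once more to exhibit ${\rm V}_M(p,t)-{\rm V}_{\mathbb{M}_\kappa^3}(t)$ as an integral of $\rho\geq 0$ against the kernel $\left(1-\cos\left(2\sqrt{\kappa}(t-s)\right)\right)/(4\kappa)\geq 0$. This buys something concrete: the paper's Wronskian step $\frac{d}{dt}\left(Z'{\rm sn}_{4\kappa}-Z{\rm sn}_{4\kappa}'\right)=(Z''+4\kappa Z)\,{\rm sn}_{4\kappa}\geq 0$ requires ${\rm sn}_{4\kappa}\geq 0$, which for $\kappa>0$ holds only up to $\pi/(2\sqrt{\kappa})$, i.e.\ half of the stated range; past that point ${\rm sn}_{4\kappa}$ changes sign, and monotonicity of $Z/{\rm sn}_{4\kappa}$ no longer yields $Z\geq 0$. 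Your kernel $1-\cos\geq 0$ suffers no such restriction, so your argument (unlike the paper's, as written) covers the whole interval $t\leq\min\{{\rm inj}(p),\pi/\sqrt{\kappa}\}$ --- indeed any $t\leq{\rm inj}(p)$ if ${\rm V}_{\mathbb{M}_\kappa^3}(t)$ is read as $\int_0^t{\rm A}_\kappa(r)\,dr$ --- and thereby repairs the delicate point you correctly identified as the main obstacle.
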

The hypothesis of the above Theorem implies global upper bounds in the  Ricci curvature. In the following Theorem we are  assuming that the positive upper bound of the Ricci curvature has finite $L_1$-norm in $M$. More precisely, for any point $q\in M$ let us denote by $K_+:M\to \mathbb{R}$ the function 
$$
q\mapsto K_+(q)=\max\left\lbrace 0,\{\max\left\lbrace{\rm Ric}(v,v)\, \,  :\, \, v\in T_qM\, {\rm with} \, \Vert v\Vert =1\right\rbrace\ \right\rbrace
$$
Under the hypothesis of finite $L_1$-norm of this $K_+$ function we obtain the following Theorem
\begin{theorem}\label{teo-dua}
Let $(M,g)$ be a $3$-dimensional Riemannian manifold. Suppose that
$$
\int_MK_+ d{\rm V}_g=C <\infty.
$$Then, for any $p\in M$ and for any $t\leq {\rm inj}(p)$,
the volume ${\rm V}_M(p,t)$ of the geodesic ball of radius $t$ centered at $p$ is bounded from below by
$$
{\rm V}_M(p,t)\geq \frac{4}{3}\pi t^3-Ct^2.
$$
\end{theorem}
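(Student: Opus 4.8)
The plan is to...The plan is to work in geodesic polar coordinates centred at $p$, valid on $B(p,t)$ because $t\le {\rm inj}(p)$, and to reduce everything to the area function $a(s):={\rm Area}(\partial B(p,s))=\int_{S^2}\mathcal A(s,\theta)\,d\theta$, where $\mathcal A(s,\theta)$ is the Jacobian of the exponential map. Since ${\rm V}_M(p,t)=\int_0^t a(s)\,ds$ and the model volume $\tfrac43\pi t^3$ equals $\int_0^t 4\pi s^2\,ds$, the target inequality follows once I prove the second-order estimate $a''(s)\ge 8\pi-2\int_{\partial B(p,s)}K_+\,dA$ and integrate it twice against the initial data $a(0)=a'(0)=0$. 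So the whole proof is organised as: (i) compute $a''$ exactly; (ii) bound it below; (iii) integrate.

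For step (i) I would differentiate under the integral sign. Writing $m$ for the mean curvature of the geodesic sphere (the trace of its shape operator $S$), one has $\partial_s\mathcal A=m\,\mathcal A$, hence $a'(s)=\int_{\partial B(p,s)}m\,dA$ and $a''(s)=\int_{\partial B(p,s)}(m'+m^2)\,dA$. The radial Riccati equation $S'+S^2+R=0$ gives, on taking traces, $m'=-{\rm tr}(S^2)-{\rm Ric}(\gamma',\gamma')$; and since $S$ is a $2\times2$ operator, $m^2-{\rm tr}(S^2)=2\det S$. Therefore $m'+m^2=-{\rm Ric}(\gamma',\gamma')+2\det S$, so that
\begin{equation}
a''(s)=-\int_{\partial B(p,s)}{\rm Ric}(\gamma',\gamma')\,dA+2\int_{\partial B(p,s)}\det S\,dA.
\end{equation}

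The crux, and the reason the argument is confined to dimension $3$, is the evaluation of $\int_{\partial B}\det S\,dA$. Here $\partial B(p,s)$ is a smooth surface diffeomorphic to $S^2$, and $\det S$ is exactly its extrinsic (Gauss--Kronecker) curvature. The Gauss equation identifies $\det S=K_{\mathrm{int}}-\mathrm{sec}_M(e_1,e_2)$, where $K_{\mathrm{int}}$ is the intrinsic Gauss curvature of the sphere and $e_1,e_2$ span its tangent plane; Gauss--Bonnet then yields $\int_{\partial B}K_{\mathrm{int}}\,dA=2\pi\chi(S^2)=4\pi$. Hence $\int_{\partial B}\det S\,dA=4\pi-\int_{\partial B}\mathrm{sec}_M(e_1,e_2)\,dA$. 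Substituting, and using that in an orthonormal frame $e_1,e_2,e_3=\gamma'$ one has ${\rm Ric}(\gamma',\gamma')+2\,\mathrm{sec}_M(e_1,e_2)={\rm Ric}(e_1,e_1)+{\rm Ric}(e_2,e_2)$, I obtain the clean identity $a''(s)=8\pi-\int_{\partial B(p,s)}\bigl({\rm Ric}(e_1,e_1)+{\rm Ric}(e_2,e_2)\bigr)\,dA$. Because ${\rm Ric}(e_i,e_i)\le K_+$ by definition of $K_+$, this produces the desired lower bound $a''(s)\ge 8\pi-2\int_{\partial B(p,s)}K_+\,dA$.

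For step (iii) I would integrate twice, converting boundary integrals into solid ones via the coarea formula. Integrating once gives $a'(s)\ge 8\pi s-2\int_{B(p,s)}K_+\,dV_g\ge 8\pi s-2C$, using $a'(0)=0$ and $\int_{B(p,s)}K_+\le\int_M K_+=C$; integrating again gives $a(s)\ge 4\pi s^2-2Cs$, and a final integration yields ${\rm V}_M(p,t)=\int_0^t a(s)\,ds\ge\tfrac43\pi t^3-Ct^2$. The only genuine obstacle is step (ii): controlling $\det S$, which cannot be bounded pointwise by an upper Ricci bound in any dimension. The Gauss--Bonnet trick circumvents this by needing only the \emph{integral} of $\det S$, and it is precisely this that forces $n=3$ (so geodesic spheres are surfaces) and $t\le{\rm inj}(p)$ (so they are embedded copies of $S^2$ on which Gauss--Bonnet applies).
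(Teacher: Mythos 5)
Your proposal is correct and follows essentially the same route as the paper: the heart of both arguments is the identity
$a''(s)=8\pi-\int_{S_s(p)}\bigl({\rm Ric}(e_1,e_1)+{\rm Ric}(e_2,e_2)\bigr)\,d{\rm A}_g$,
obtained by combining the second variation of the area of geodesic spheres with the Gauss equation and Gauss--Bonnet on the $2$-sphere $S_s(p)$, followed by the pointwise bound ${\rm Ric}(e_i,e_i)\le K_+$ and repeated integration (with the coarea formula turning sphere integrals into ball integrals bounded by $C$). The only cosmetic difference is how the variation formula is derived: you use the radial Riccati equation for the shape operator together with the trace identity $m^2-{\rm tr}(S^2)=2\det S$, while the paper Lie-differentiates the area form and quotes Petersen's formula for $\langle\nabla\Delta_M r,\nabla r\rangle$ --- the same mathematical content in different packaging.
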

An immediate consequence of the above Theorem is the following Corollary when we restrict ourselves to manifolds with pole 

\begin{corollary}\label{cor-infinite-volume}
Let $(M,g)$ be a $3$-dimensional Riemannian manifold with a point $p\in M$ with empty cut locus ${\rm Cut}(p)=\emptyset$, suppose that $$
\sup_{M}{\rm Ric}<\infty.
$$
Then $M$ has infinite total volume, \emph{i.e.},
$$
{\rm vol}(M)=\infty.
$$
\end{corollary}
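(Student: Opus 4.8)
The plan is to derive the corollary directly from Theorem~\ref{teo-dua} by exploiting the hypotheses. First I would observe that the condition $\mathrm{Cut}(p)=\emptyset$ means $p$ is a pole, so $\mathrm{inj}(p)=+\infty$. This removes the restriction $t\leq \mathrm{inj}(p)$ in Theorem~\ref{teo-dua}, allowing us to take $t$ arbitrarily large. The remaining task is to check that the finiteness hypothesis of Theorem~\ref{teo-dua} is satisfied, or rather to work around it, since $\sup_M \mathrm{Ric}<\infty$ does \emph{not} immediately imply $\int_M K_+\,d\mathrm{V}_g<\infty$.

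The key subtlety will be reconciling the \emph{global} finite-integral hypothesis of Theorem~\ref{teo-dua} with the weaker \emph{pointwise} bound $\sup_M \mathrm{Ric}<\infty$ assumed here. My approach would be to apply the conclusion of Theorem~\ref{teo-dua} on each geodesic ball $\mathrm{B}(p,t)$ rather than on all of $M$: set $C(t)=\int_{\mathrm{B}(p,t)} K_+\,d\mathrm{V}_g$, which is finite whenever $\mathrm{V}_M(p,t)$ is finite because $K_+\leq \sup_M\mathrm{Ric}=:\Lambda<\infty$ gives $C(t)\leq \Lambda\,\mathrm{V}_M(p,t)$. Then, tracking the proof of Theorem~\ref{teo-dua} localized to $\mathrm{B}(p,t)$, one obtains
\begin{equation}
\mathrm{V}_M(p,t)\geq \frac{4}{3}\pi t^3 - C(t)\,t^2 \geq \frac{4}{3}\pi t^3 - \Lambda\,t^2\,\mathrm{V}_M(p,t).
\end{equation}
Rearranging gives $\mathrm{V}_M(p,t)\,(1+\Lambda t^2)\geq \tfrac{4}{3}\pi t^3$, hence
\begin{equation}
\mathrm{V}_M(p,t)\geq \frac{\tfrac{4}{3}\pi t^3}{1+\Lambda t^2}.
\end{equation}

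Finally I would let $t\to\infty$. The right-hand side behaves like $\tfrac{4\pi}{3\Lambda}\,t$ and diverges, so $\mathrm{V}_M(p,t)\to\infty$ and therefore $\mathrm{vol}(M)=\lim_{t\to\infty}\mathrm{V}_M(p,t)=\infty$. The main obstacle I anticipate is justifying the localized version of Theorem~\ref{teo-dua}: one must confirm that the constant $C$ in that theorem can legitimately be taken as the integral of $K_+$ over the ball $\mathrm{B}(p,t)$ itself (not over all of $M$), which should follow because the proof of Theorem~\ref{teo-dua} presumably integrates a differential inequality for the volume element along geodesics emanating from $p$, and those geodesics stay within $\mathrm{B}(p,t)$ up to radius $t$. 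If instead the cleaner route is available—namely that $\sup_M\mathrm{Ric}<\infty$ together with the pole condition lets one apply Theorem~\ref{teo-unua} with $2\kappa=\Lambda$—then for $t\leq \pi/\sqrt{\kappa}$ one already gets $\mathrm{V}_M(p,t)\geq \mathrm{V}_{\mathbb{M}^3_\kappa}(t)$, but this only controls bounded radii, so the localized Theorem~\ref{teo-dua} argument above is the one that actually forces infinite total volume.
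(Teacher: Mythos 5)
Your proof is correct, but it takes a genuinely different route from the one the paper intends. The paper presents Corollary \ref{cor-infinite-volume} as an \emph{immediate} consequence of Theorem \ref{teo-dua} used as a black box, via a dichotomy (equivalently, a contradiction argument): if ${\rm vol}(M)<\infty$, then the pointwise bound $K_+\leq \Lambda:=\max\{0,\sup_M{\rm Ric}\}<\infty$ gives $\int_M K_+\,d{\rm V}_g\leq \Lambda\,{\rm vol}(M)<\infty$, so Theorem \ref{teo-dua} applies with this finite constant $C$; since ${\rm Cut}(p)=\emptyset$ makes ${\rm inj}(p)=\infty$, letting $t\to\infty$ in ${\rm V}_M(p,t)\geq \frac{4}{3}\pi t^3-Ct^2$ contradicts the finiteness of the volume. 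You instead localize the theorem, and your localization is legitimate: the paper's own proof of Theorem \ref{teo-dua} establishes, via Proposition \ref{Gauss-Bonnet}, exactly ${\rm V}_M''(p,t)\geq 8\pi t-2C(t)$ with $C(t)=\int_{B_t(p)}K_+\,d{\rm V}_g$ \emph{before} weakening $C(t)$ to the global constant $C$; since $C(s)\leq C(t)$ for $s\leq t$, the double integration for fixed $t$ goes through and yields ${\rm V}_M(p,t)\geq \frac{4}{3}\pi t^3-C(t)t^2$, as you claim. Combined with $C(t)\leq\Lambda\,{\rm V}_M(p,t)$ (balls of finite radius about a pole have finite volume, so the rearrangement is valid), your argument buys something the paper's does not: the explicit bound
\begin{equation*}
{\rm V}_M(p,t)\geq \frac{\tfrac{4}{3}\pi t^3}{1+\Lambda t^2},
\end{equation*}
i.e.\ \emph{linear} volume growth in the spirit of Calabi--Yau, rather than merely infinite volume. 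One small repair: you must set $\Lambda:=\max\{0,\sup_M{\rm Ric}\}$ rather than $\sup_M{\rm Ric}$, both because $K_+$ is by definition nonnegative and because, if $\sup_M{\rm Ric}<0$, your denominator $1+\Lambda t^2$ would vanish at finite $t$ and the rearranged inequality would reverse; with $\Lambda\geq 0$ everything works (and in that case $K_+\equiv 0$, so the bound is simply ${\rm V}_M(p,t)\geq\frac{4}{3}\pi t^3$).
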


\begin{remark}
Observe that in the above Corollary, and in the main results of this paper, the assumption that $M$ contains a point with empty cut locus can not be removed. Indeed, Lohkamp proved in \cite{Lohkamp1994} that each manifold $M^n$, $n\geq 3$ admits a complete metric with  negative Ricci curvature and finite volume, ${\rm vol}(M)<\infty$. 
\end{remark}
\begin{remark}
The Hypothesis of dimension $3$ is used in the proof of Theorem \ref{teo-unua} (see Section \S \ref{ProofSec}) because in such a case the  integral of the (intrinsic) scalar curvature of the $2$-dimensional geodesic spheres is a topological invariant via the Gauss-Bonnet Theorem. We would like to remark here that the assumption that $M$ has dimension $3$ can not be removed. For $n>3$, upper bounds on the Ricci curvature are not enough to state the result. See for instance \cite{Kloeckner2015} where  the ``$\sqrt{{\rm Ric}}$'' curvature is used to obtain Günter type inequalities for any dimension. The Bishop-Gromov Theorem can not be reversed: the statement that if ${\rm Ric}\leq (n-1) \kappa$ then ${\rm V}(p,t)\geq {\rm V}_{\mathbb{M}_\kappa^n}(t)$ is not true for $n>3$.  Consider, for example, the complex hyperbolic space $(\mathbb{C}H_b^{N},g_b)$ of dimension $2N$ endowed with the metric $g_b$ of constant holomorphic sectional curvature $-4b$. The Ricci tensor and the volume of a geodesic ball of radius $t$ are given by (see \cite{Tubes} for instance)
\begin{equation}\label{Einstein}
{\rm Ric}=-2b(N+1) g_b,\quad {\rm V}(t)=\frac{1}{N!}\left(\frac{\pi}{b}\right)^N\left(\sinh(t\sqrt{b})\right)^{2N}.
\end{equation}
Then there exists a constant $B(N)$ such that 
$$
{\rm V}(t)\leq B(N) e^{2N \sqrt{b}t}.
$$
But observe that (\ref{Einstein}) is compatible with the bound
$$
{\rm Ric}\leq (n-1)\kappa,\quad\kappa=\frac{-2b(N+1)}{2N-1},
$$
and
$$
{\rm V}_{\mathbb{M}_\kappa^{2N}}(t)=\frac{C_n}{\sqrt{-\kappa}}\int_0^t\sinh^{2N-1}(\sqrt{-\kappa}s)ds.
$$
Hence, there exists an other constant $C(N)$ such that 
$$
{\rm V}_{\mathbb{M}_\kappa^{2N}}(t)\geq C(N) e^{\sqrt{2(N+1)}\sqrt{2N-1}\sqrt{b}t},\quad {\rm for}\quad t\geq 1.
$$
Since $\sqrt{2(N+1)}\sqrt{2N-1}>2N$ for $N>1$ (dimension $2N>3$), then there exists $t$ large enough such that
$$
{\rm V}(t)< {\rm V}_{\mathbb{M}_\kappa^{2N}}(t).
$$
\end{remark}
\begin{remark}
In some cases the use of upper bounds for the Ricci curvature is more appropriate than the use of upper bounds for the sectional curvature. There are several examples where the bounds obtained in Theorem \ref{teo-unua} are better than the bounds obtained when the classical Bishop-G\"unter inequality is used. Every Berger sphere is one of these examples. 
Let $SU(2)$ be the special unitary group of $2\times 2$ matrices,
$$
\begin{aligned}
SU(2):=&\left\{A \in M_{2\times 2}(\mathbb{C})\, :\, {\rm det}(A)=1,\, A^\dag =A^{-1}\right\}\\
=&\left\{\begin{pmatrix}
z_1&z_2\\
-\overline{z_2}& \overline{z_1}
\end{pmatrix}\, :\, \vert z_1\vert^2+\vert z_2\vert^2=1 \right\}\\
=& \mathbb{S}^3(1).
\end{aligned}
$$
The Lie algebra $\mathfrak{su}(2)$ is given by
$$
\mathfrak{su}(2)={\rm span}_\mathbb{R}\{X_1,X_2,X_3\}
$$
with 
$$
X_1:=\begin{pmatrix}
i&0\\
0&-i
\end{pmatrix},\quad X_2:=\begin{pmatrix}
0&1\\
-1&0
\end{pmatrix},\quad X_3:=\begin{pmatrix}
0&i\\
i&0
\end{pmatrix}\cdot
$$
For $0<\epsilon<1$, Let $g_\epsilon$ be the metric such that
$$
\left\{\frac{X_1}{\epsilon},X_2,X_3\right\}
$$
is an orthonormal basis, then (see \cite{Petersen}),
the sectional curvatures of any tangent plane $\pi$ are bounded therefore by 
$
{\rm sec}_M(\pi)\leq 4-3\epsilon^2
$
and
$
{\rm Ric}\leq 4-2\epsilon^2
$.

Since $0<\epsilon<1$,  we have 
$
2-\epsilon^2\leq 4-3\epsilon^2 
$
and the bound given by Theorem \ref{teo-unua},
\begin{equation}
{\rm V}_M(p,t)\geq {\rm V}_{\mathbb{M}_{2-\epsilon^2}^3}(t),\quad {\rm for}\quad  t\leq\min\left\{{\rm inj}(p), 	\frac{\pi}{\sqrt{2-\epsilon^2}}\right\},
\end{equation} 
is better than the bound given by the classical inequality obtained by using the Bishop-G\"unter inequality, \emph{i.e.},
\begin{equation}
{\rm V}_M(p,t)\geq {\rm V}_{\mathbb{M}_{4-3\epsilon^2}^3}(t),\quad {\rm for}\quad  t\leq\min\left\{{\rm inj}(p), 	\frac{\pi}{\sqrt{4-3\epsilon^2}}\right\}.
\end{equation} 
\end{remark}
\begin{remark}Croke in \cite{Croke1980} proved that for any complete $n$-dimensional manifold  $(M,g)$ the volume of the geodesic ball of radius $t$ centered at $p\in M$ is bounded from below by
$$
{\rm V}(p,t)\geq \frac{2^{n-1}\omega_{n-1}^n}{\omega_n^{n-1}n^n} t^n,\quad {\rm for}\quad t\leq \frac{1}{2}{\rm inj}(M)
$$
with $\omega_n$ the volume of the unit $n$-sphere in $\mathbb{R}^{n+1}$. In the particular case of dimension $3$, 
$$
{\rm V}(p,t)\geq \left(\frac{4}{3}\right)^3\frac{1}{\pi} t^3,\quad {\rm for}\quad t\leq \frac{1}{2}{\rm inj}(M).
$$
By using Theorem \ref{teo-unua}, if ${\rm Ric}\leq 2 \kappa$ in the geodesic ball of radius $t$ centered at $p$, we can provide the following improvement:
$$
{\rm V}(p,t)\geq {\rm V}_{\mathbb{M}_\kappa^3}(t)>\left(\frac{4}{3}\right)^3\frac{1}{\pi} t^3,\quad {\rm for}\quad t\leq \min\{{\rm inj}(p),\pi/\sqrt{\kappa}\}.
$$
In \cite{Berger1980} Berger proved that for a compact $n$-dimensional manifold  $(M,g)$,
$$
{\rm vol}(M)\geq \frac{\omega_n}{\pi^n}{\rm inj}(M)^n. 
$$
When $n=3$, this equation can be rewritten as
\begin{equation}\label{volinj}
{\rm vol}(M)\geq \frac{2}{\pi}{\rm inj}(M)^3. 
\end{equation}
Since $M$ is compact there exists a constant $\kappa$ such that ${\rm Ric}(v,v)\leq \kappa g(v,v)$ for any $v\in T_pM$ and any $p\in M$. Then by using Theorem \ref{teo-unua},
$$
{\rm vol}(M)\geq {\rm V}_{\mathbb{M}_\kappa^3}(R),\quad R=\min\{{\rm inj}(M),\pi/\sqrt{\kappa}\}
$$
hence, when $\kappa\leq 0$ we can provide the following improvement of (\ref{volinj})
$$
{\rm vol}(M)\geq \frac{4\pi}{3}{\rm inj}(M)^3 .
$$
\end{remark}

\section{Proof of the Main Results}\label{ProofSec}The proof of the main results follows from Proposition \ref{Gauss-Bonnet} which is an adaptation of the area variation formula for geodesic balls in dimension $3$ taking into account that the total integral of the scalar curvature in a geodesic sphere is a topological invariant (for $2$-dimensional spheres).

Let $(M,g)$ be a $n$-dimensional Riemannian manifold, let $p\in M$ be a point of $M$, and let ${\rm inj}(p)$ denote the injectivity radius of $p$. Let ${\rm B}_{{\rm inj}(p)}(0)$ be the ball of radius ${\rm inj}(p)$ centered at $0$ in $T_pM$, let ${\rm  B}_{{\rm inj}(p)}(p)=\exp_p({\rm B}_{{\rm inj}(p)}(0))$ the geodesic ball of radius ${\rm inj}(p)$ centered at $p$, then the exponential map
$$
\exp_p: {\rm B}_{{\rm inj}(p)}(0)\longrightarrow {\rm  B}_{{\rm inj}(p)}(p),
$$
is a diffeomorphism.  The radial vector field $\partial r$ is globally defined on ${\rm  B}_{{\rm inj}(p)}(p)\setminus\{p\}$ and is given by
$$
\partial r:{\rm  B}_{{\rm inj}(p)}(p)\setminus\{p\}\to TM\setminus\{p\},\quad q\mapsto\partial r (q)=\frac{d}{dt}\left.\exp_p\left(r(q)\theta(q)t\right)\right\vert_{t=1}.
$$ 
Namely, $\partial r(q)$ is the tangent vector to the arc-length parametrized geodesic curve from $p$ to $q$.

Let us denote by $d{\rm V}_g$ the Riemannian volume form associated to $g$. The volume ${\rm V}_M(p,t)$ of the geodesic ball $B_t(p)$ of radius $t$  centered at $p$ is given by
$$
{\rm V}_M(p,t)=\int_{B_t(p)}d{\rm V}_g
$$   
The vector field  $\partial r$ coincides with the gradient $\nabla r$  of the polar radius function $r$ on ${\rm  B}_{{\rm inj}(p)}(p)\setminus \{p\}$, \emph{i.e.}, $\partial r=\nabla r$, and furthermore $\Vert \nabla r\Vert=\Vert \partial r\Vert=1$. Moreover since the geodesic sphere $S_t(p)$ of radius $t$ centered at $p$ is a level set of $r$, \emph{i.e.}, $S_t(p)=r^{-1}(t)$ then the vector field $\nabla r$ is a unit vector field normal and pointed outward to $S_t(p)$. 

The volume ${\rm A}_M(p,t)$ of the  geodesic sphere $S_t(p)$ of radius $t$ centered at $p$, is given therefore by
$$
{\rm A}_M(p,t)=\int_{S_t(p)}\nabla r\lrcorner d{\rm V}_g
$$  
where $\nabla r\lrcorner d{\rm V}_g$ is the contraction of the Riemannian volume form $d{\rm V}_g$ with the vector field $\nabla r$. In order to simplify the notation we will make use of
$$
d{\rm A}_g:=\nabla r\lrcorner d{\rm V}_g
$$
Note that for $0<t < {\rm inj}(p)$,  the function 
$$
t\mapsto {\rm V}_M(p,t)
$$
is smooth and with derivative ${\rm A}_M(p,t)$. The second fundamental form $\alpha$ of the inclusion map from $S_t(p)$ to $M$ is given in terms of the Hessian ${\rm Hess}_Mr$ of the geodesic distance function $r$ to the pole $p$ because for any two vector fields $X,Y \in \mathfrak{X}(S_t(p))$
$$
\begin{array}{lcl}
\alpha(X,Y)&=&\langle \nabla_XY,\nabla r\rangle\nabla r=\left(X(\langle  Y,\nabla r\rangle)-\langle Y,\nabla_X\nabla r\rangle\right)\nabla r\\
&=&-\langle Y,\nabla_X\nabla r\rangle\nabla r
=-{\rm Hess}_Mr(X,Y)\nabla r.
\end{array}
$$   
The mean curvature vector field $\vec H $ of $S_t(p)$ is given therefore in terms of the Laplacian $\Delta_Mr$ of the distance function to $p$ because for any $q\in S_t(p)$ and any orthonorlmal basis $\{E_i\}_{i=1}^{n-1}$ of $T_q S_t(p)$ 
$$
\vec{H}=\sum_{i=1}^{n-1} \alpha(E_i,E_i)=-\sum_{i=1}^{n-1} {\rm Hess}_Mr(E_i,E_i)\nabla r=-\Delta_M r\nabla r
$$
The following Proposition states the first and second variation formula for the area function $t\mapsto {\rm A}_M(t)$,
\begin{proposition}[See pages 4 and 8 of  \cite{LiBook}]\label{prop1}
Let $(M,g)$ be a Riemannian manifold, suppose that $p\in M$ and $t<{\rm inj}(p)$. Then,
\begin{enumerate}
\item The first derivative ${\rm A}_M'(p,t)$ with respect to $t$ of the volume ${\rm A}_M(p,t)$ of the geodesic   sphere $S_t(p)$ of radius $t$ centered at $p$ is given by
$$
{\rm A}_M'(p,t)=\int_{S_t(p)}\Delta_Mrd{\rm A}_g=\int_{S_t(p)}Hd{\rm A}_g.
$$
\item The second derivative ${\rm A}_M''(p,t)$ with respect to $t$ of the volume ${\rm A}_M(p,t)$ of the geodesic  sphere $S_t(p)$  of radius $t$ centered at $p$ is given by
$$
\begin{array}{lcl}
\displaystyle{\rm A}_M''(p,t)&=&\displaystyle\int_{S_{t}(p)}\left(-{\rm Ric}(\nabla r,\nabla r)-\Vert {\rm Hess}_Mr\Vert^2+\left(\Delta_Mr\right)^2\right)d{\rm A}_g
\end{array}
$$ 
\end{enumerate}  
\end{proposition}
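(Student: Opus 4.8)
The plan is to work in the geodesic polar coordinates introduced above and to reduce everything to the behaviour of the volume density $J(r,\theta):=\sqrt{\det D(r,\theta)}$ along the radial geodesics. Since $d{\rm V}_g=J\,dr\wedge d\theta$ and $d{\rm A}_g=\nabla r\lrcorner d{\rm V}_g=J\,d\theta$ on $S_t(p)$, the area is ${\rm A}_M(p,t)=\int_{\mathbb{S}^{n-1}}J(t,\theta)\,d\theta$, and differentiation under the integral sign is legitimate because $t<{\rm inj}(p)$ keeps $J$ smooth. The first identity I would establish is $\partial_r J=(\Delta_M r)\,J=H\,J$. This follows from the intrinsic expression for the Laplacian: in polar coordinates $\sqrt{\det g}=J$, while $\nabla r=\partial r$ with $g^{rr}=1$ and $g^{r\theta^i}=0$, so that $\Delta_M r=J^{-1}\partial_r(J)$. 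Part (1) is then immediate, since differentiating ${\rm A}_M(p,t)=\int_{\mathbb{S}^{n-1}}J\,d\theta$ and rewriting $d{\rm A}_g=J\,d\theta$ gives ${\rm A}_M'(p,t)=\int_{S_t(p)}H\,d{\rm A}_g$.

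For part (2) I would differentiate once more and use $\partial_r J=HJ$ to obtain
$$
{\rm A}_M''(p,t)=\int_{\mathbb{S}^{n-1}}\partial_r(HJ)\,d\theta=\int_{\mathbb{S}^{n-1}}\left(\partial_r H+H^2\right)J\,d\theta.
$$
It thus remains to compute the radial derivative $\partial_r H=\partial_r(\Delta_M r)$. Here I would invoke the Bochner formula for $r$ together with the eikonal identity $\Vert\nabla r\Vert^2\equiv 1$: since $\tfrac12\Delta_M\Vert\nabla r\Vert^2=0$, Bochner yields $0=\Vert{\rm Hess}_M r\Vert^2+\langle\nabla r,\nabla(\Delta_M r)\rangle+{\rm Ric}(\nabla r,\nabla r)$, and because $\langle\nabla r,\nabla(\Delta_M r)\rangle=\partial_r H$ this gives $\partial_r H=-\Vert{\rm Hess}_M r\Vert^2-{\rm Ric}(\nabla r,\nabla r)$. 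Substituting into the display above produces the first expression for ${\rm A}_M''(p,t)$. Equivalently, one may trace the matrix Riccati equation $\partial_r S+S^2+R_{\partial r}=0$ satisfied by the shape operator $S$ of the geodesic spheres, where $R_{\partial r}(X)=R(X,\partial r)\partial r$.

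The passage to the second expression is purely algebraic and rests on the Gauss equation for the hypersurface $S_t(p)\subset M$, whose unit normal is $\nabla r$, whose scalar second fundamental form is ${\rm Hess}_M r$ (consistently with $\alpha(X,Y)=-{\rm Hess}_M r(X,Y)\nabla r$ fixed above), whose mean curvature is $H=\Delta_M r$, and for which $\Vert{\rm Hess}_M r\Vert^2={\rm tr}(S^2)$. Tracing the Gauss equation gives
$$
{\rm scal}_{S_t(p)}={\rm scal}_M-2\,{\rm Ric}(\nabla r,\nabla r)+(\Delta_M r)^2-\Vert{\rm Hess}_M r\Vert^2;
$$
solving for $(\Delta_M r)^2-\Vert{\rm Hess}_M r\Vert^2$ and inserting the result into the first form of ${\rm A}_M''(p,t)$ yields the second.

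I expect the genuine work to lie in the derivation of $\partial_r H$ rather than in the bookkeeping: one must justify that the eikonal equation collapses the Bochner formula to this Riccati-type identity (equivalently, prove the Riccati equation for $S$ along the radial geodesics), and one must keep the sign and normalisation conventions for the second fundamental form consistent so that the Gauss equation is applied correctly. The remaining steps—differentiating under the integral sign and the algebraic substitution—are routine once $\partial_r J=HJ$ and the formula for $\partial_r H$ are in hand.
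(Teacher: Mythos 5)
Your proposal is correct, but it implements the variational part of the argument by a genuinely different (and more elementary) route than the paper. Where you reduce everything to the polar-coordinate volume density $J=\sqrt{\det D}$, use $\Delta_M r=J^{-1}\partial_r J$, and differentiate under the integral sign over the compact fibre $\mathbb{S}^{n-1}$, the paper instead works intrinsically with the flow $\phi_h$ of $\nabla r$, identifies ${\rm A}_M'(p,t)$ as $\int_{S_t(p)}\mathcal{L}_{\nabla r}\left(\nabla r\lrcorner d{\rm V}_g\right)$, computes this Lie derivative via Cartan's magic formula, and justifies the exchange of limit and integral with a Dominated Convergence argument over a parametrization of $S_t(p)$; your coordinate approach makes that analytic justification essentially automatic, at the cost of being chart-dependent rather than intrinsic. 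The curvature input is the same in both arguments: your Bochner-plus-eikonal identity $\partial_r H=-\Vert{\rm Hess}_M r\Vert^2-{\rm Ric}(\nabla r,\nabla r)$ is precisely the formula the paper imports as Proposition 39 of Petersen (you additionally sketch its proof, which the paper does not), and your traced Gauss equation ${\rm scal}_{S_t(p)}={\rm scal}_M-2\,{\rm Ric}(\nabla r,\nabla r)+(\Delta_M r)^2-\Vert{\rm Hess}_M r\Vert^2$ accomplishes in one line what the paper does by diagonalizing ${\rm Hess}_M r$ and applying the Gauss formula to each pair of eigendirections; as you note, the sign convention for $\alpha$ is immaterial here since the traced Gauss equation is quadratic in the second fundamental form. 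Both routes are complete and rigorous; yours is shorter on the analysis, the paper's is coordinate-free.
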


Let us choose now an orthonormal basis $\{E_1,\cdots,E_{n-1},\nabla r\}$ of $T_qM$ which diagonalizes ${\rm Hess}_Mr$, \emph{i.e.},
$$
{\rm Hess}_Mr(E_i,E_j)=\left\lbrace\begin{array}{lcl}
\lambda_i&{\rm if }& i=j\\
0 &{\rm if }& i\neq j
\end{array}\right.
$$
Then, 
$$
\begin{array}{lcl}
 -\Vert {\rm Hess}_Mr\Vert^2+\left(\Delta_Mr\right)^2&=&\displaystyle -\sum_{i=1}^{n-1}\lambda_i^2+\left(\sum_{i=1}^{n-1}\lambda_i\right)^2\\
 &=&\displaystyle -\sum_{i=1}^{n-1}\lambda_i^2+\left(\sum_{i=1}^{n-1}\lambda_i\right)\left(\sum_{j=1}^{n-1}\lambda_j\right)\\
 & =&\displaystyle -\sum_{i=1}^{n-1}\lambda_i^2+\sum_{i,j=1}^{n-1}\lambda_i\lambda_j=\sum_{i\neq j}^{n-1}\lambda_i\lambda_j
\end{array}
$$
Taking into account that $\alpha(E_i,E_j)=-{\rm Hess}_Mr(E_i,E_j)\nabla r$ and using the Gauss formula (see \cite{Oneill} for instance) 
$$
-\Vert {\rm Hess}_Mr\Vert^2+\left(\Delta_Mr\right)^2=\sum_{i\neq j}^{n-1}\left({\rm sec}_{S_t(p)}(E_i,E_j)-{\rm sec}_{M}(E_i,E_j)\right)
$$
Finally, it is easy to check that
$$
-{\rm Ric}(\nabla r,\nabla r)-\Vert {\rm Hess}_Mr\Vert^2+\left(\Delta_Mr\right)^2={\rm scal}_{S_t(p)}+{\rm Ric}(\nabla r,\nabla r)-{\rm scal}_M,
$$
where ${\rm scal}_M$ is the scalar curvature function of $M$, ${\rm Ric}(\nabla r, \nabla r)$ is the Ricci tensor evaluated in $\nabla r$, and ${\rm scal}_{S_t(p)}$ is the intrinsic scalar curvature of the sphere $S_t(p)$. 
Then
\begin{equation}
\begin{array}{lcl}
\displaystyle{\rm A}_M''(p,t)
&=&\displaystyle\int_{S_t(p)}\left({\rm scal}_{S_t(p)}+{\rm Ric}(\nabla r,\nabla r)-{\rm scal}_M\right)d{\rm A}_g
\end{array}
\end{equation}

When the dimension of $M$ is $3$, the geodesic sphere $S_t(p)$ of radius $t$ centered at $p$ has dimension $2$ and the scalar curvature is given in terms of the Gaussian curvature $K_G$,
$$
{\rm scal}_{S_t(p)}=2 K_G
$$
and by the Gauss-bonnet Theorem
$$
\int_{S_t(p)}{\rm scal}_{S_t(p)}d{\rm A}_g=2\int_{S_t(p)}K_G d{\rm A}_g =4\pi\chi(S_t(p))=8\pi
$$
therefore we can state the following Corollary to Proposition \ref{prop1}
\begin{corollary}\label{cordim3}
Let $(M,g)$ be a $3$-dimensional Riemannian manifold, let $p\in M$ be a point of $M$. Then for any $0<t<{\rm inj}(p)$,
the second derivative ${\rm A}_M''(p,t)$ with respect to $t$ of the volume ${\rm A}_M(p,t)$ of the geodesic  sphere $S_t(p)$  of radius $t$ centered at $p$ is given by
$$
\begin{array}{lcl}
\displaystyle{\rm A}_M''(p,t)
&=&\displaystyle8\pi-\int_{S_t(p)}\left({\rm scal}_M-{\rm Ric}(\nabla r,\nabla r)\right)d{\rm A}_g.
\end{array}
$$ 
where ${\rm scal}_M$ is the scalar curvature function of $M$, and ${\rm Ric}(\nabla r, \nabla r)$ is the Ricci tensor evaluated in $\nabla r$.  
\end{corollary}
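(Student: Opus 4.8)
The plan is to specialize the second variation formula of Proposition \ref{prop1} to dimension $n=3$, where the geodesic sphere $S_t(p)$ is a closed surface and the intrinsic scalar curvature term can be integrated explicitly by Gauss--Bonnet. Concretely, I would start from the second identity in Proposition \ref{prop1},
$$
{\rm A}_M''(p,t)=\int_{S_t(p)}\left({\rm scal}_{S_t(p)}+{\rm Ric}(\nabla r,\nabla r)-{\rm scal}_M\right)d{\rm A}_g,
$$
and split the integrand into the purely intrinsic piece ${\rm scal}_{S_t(p)}$ and the remaining piece ${\rm Ric}(\nabla r,\nabla r)-{\rm scal}_M$. The latter already has the shape appearing in the statement, so the whole task reduces to evaluating $\int_{S_t(p)}{\rm scal}_{S_t(p)}\,d{\rm A}_g$.

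For that intrinsic piece I would use that $S_t(p)$ is $2$-dimensional, so its scalar curvature is twice its Gaussian curvature, ${\rm scal}_{S_t(p)}=2K_G$. The Gauss--Bonnet theorem then gives
$$
\int_{S_t(p)}{\rm scal}_{S_t(p)}\,d{\rm A}_g=2\int_{S_t(p)}K_G\,d{\rm A}_g=4\pi\,\chi(S_t(p)).
$$
The one genuinely geometric input is the topological identification of $S_t(p)$. Since $t<{\rm inj}(p)$, the exponential map is a diffeomorphism from ${\rm B}_{{\rm inj}(p)}(0)\subset T_pM$ onto ${\rm B}_{{\rm inj}(p)}(p)$, so it carries the Euclidean sphere of radius $t$ in $T_pM\cong\mathbb{R}^3$ diffeomorphically onto $S_t(p)$. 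Hence $S_t(p)$ is a closed orientable surface diffeomorphic to $\mathbb{S}^2$, giving $\chi(S_t(p))=2$ and therefore $\int_{S_t(p)}{\rm scal}_{S_t(p)}\,d{\rm A}_g=8\pi$.

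Substituting this constant back into the split formula yields
$$
{\rm A}_M''(p,t)=8\pi+\int_{S_t(p)}\left({\rm Ric}(\nabla r,\nabla r)-{\rm scal}_M\right)d{\rm A}_g=8\pi-\int_{S_t(p)}\left({\rm scal}_M-{\rm Ric}(\nabla r,\nabla r)\right)d{\rm A}_g,
$$
which is exactly the claimed identity. I do not expect any serious obstacle: the entire content is the reduction ${\rm scal}_{S_t(p)}=2K_G$ together with Gauss--Bonnet, and the only point requiring a word of justification is that $\chi(S_t(p))=2$, which follows immediately from $t<{\rm inj}(p)$ forcing $S_t(p)$ to be an embedded topological sphere.
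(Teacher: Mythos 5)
Your proposal is correct and follows essentially the same route as the paper: specialize the second variation formula of Proposition \ref{prop1} to $n=3$, use ${\rm scal}_{S_t(p)}=2K_G$, and apply Gauss--Bonnet with $\chi(S_t(p))=2$ to evaluate the intrinsic term as $8\pi$. Your explicit justification that $S_t(p)$ is diffeomorphic to $\mathbb{S}^2$ via the exponential map (hence $\chi=2$) is a point the paper leaves implicit, but otherwise the arguments coincide.
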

Since ${\rm V}_M'(p,t)={\rm A}_M(p,t)$,
$$
{\rm V}_M''(p,s)\vert_{s=t}-{\rm V}_M''(p,t_0)=\int_{t_0}^t{\rm A}_M''(p,s)ds
$$
Therefore using the above Corollary,
\begin{equation}\label{eqkvara}
\begin{aligned}
\displaystyle{\rm V}_M''(p,t)-\int_{S_{t_0}(p)}Hd{\rm A}_g=&8\pi(t-t_0)\\&-\int_{t_0}^t\int_{S_s(p)}\left({\rm scal}_M-{\rm Ric}(\nabla r,\nabla r)\right)d{\rm A}_gds
\end{aligned}
\end{equation}

Taking the limit $t_0\to 0$ we obtain the following

\begin{proposition}\label{Gauss-Bonnet}Let $(M,g)$ be a $3$-dimensional Riemannian manifold, let $p\in M$ be a point of $M$.  Then for any $0<t<{\rm inj}(p)$
$$
\displaystyle\int_{B_t(p)}\left({\rm scal}_M-{\rm Ric}(\nabla r,\nabla r)\right)d{\rm V}_g+{\rm V}_M''(p,t)=8\pi t
$$
\end{proposition}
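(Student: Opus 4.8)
The starting point is equation (\ref{eqkvara}), valid for every pair $0<t_0<t<{\rm inj}(p)$; isolating ${\rm V}_M''(p,t)$ it reads
$$
{\rm V}_M''(p,t)=\int_{S_{t_0}(p)}Hd{\rm A}_g+8\pi(t-t_0)-\int_{t_0}^t\int_{S_s(p)}\left({\rm scal}_M-{\rm Ric}(\nabla r,\nabla r)\right)d{\rm A}_gds.
$$
Since the left-hand side is independent of $t_0$, the entire plan is to let $t_0\to 0^+$ and identify the limit of each of the three terms on the right; the desired identity will then follow by rearrangement.

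The term $8\pi(t-t_0)$ obviously tends to $8\pi t$. For the double integral, write $f:={\rm scal}_M-{\rm Ric}(\nabla r,\nabla r)$, which is continuous and therefore bounded on the compact ball $\overline{B_t(p)}$. Because $\nabla r$ is a unit field and $d{\rm A}_g=\nabla r\lrcorner d{\rm V}_g$, integration in geodesic polar coordinates gives the coarea identity $\int_{B_t(p)}fd{\rm V}_g=\int_0^t\left(\int_{S_s(p)}fd{\rm A}_g\right)ds$. Moreover ${\rm A}_M(p,s)\to 0$ as $s\to 0$ (the sphere $S_s(p)$ collapses to $p$), so the inner integral $s\mapsto\int_{S_s(p)}fd{\rm A}_g$ is bounded near $s=0$; hence $\int_{t_0}^t(\cdots)ds\to\int_0^t(\cdots)ds=\int_{B_t(p)}fd{\rm V}_g$ as $t_0\to 0^+$.

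The remaining term is the boundary contribution $\int_{S_{t_0}(p)}Hd{\rm A}_g$, and this is the crux. By Proposition \ref{prop1}(1) together with ${\rm V}_M'(p,s)={\rm A}_M(p,s)$ it equals ${\rm A}_M'(p,t_0)$, so I must show ${\rm A}_M'(p,t_0)\to 0$. I would argue this through the behaviour of the volume density of the exponential map. Writing the metric in polar form $g=dr^2+\sum D_{ij}d\theta^id\theta^j$ and $J(s,\theta):=\sqrt{\det D(s,\theta)}$, one has $d{\rm A}_g=Jd\theta$ and $H=\Delta_Mr=\partial_s\log J$, so that $\int_{S_s(p)}Hd{\rm A}_g=\int_{\mathbb{S}^{2}}\partial_sJ\,d\theta={\rm A}_M'(p,s)$. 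The standard Jacobi-field expansion of $\exp_p$ near $p$ gives $J(s,\theta)=s^{2}\bigl(1+O(s^2)\bigr)$ uniformly in $\theta$, hence $\partial_sJ=2s+O(s^3)$ and ${\rm A}_M'(p,s)=8\pi s+O(s^3)\to 0$. This is precisely where the dimension enters: the exponent of the leading term is $n-2$, which is positive exactly when $n\geq 3$.

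Collecting the three limits in the displayed identity yields ${\rm V}_M''(p,t)=0+8\pi t-\int_{B_t(p)}fd{\rm V}_g$, which is the asserted formula. \textbf{The main obstacle is the vanishing of the boundary term} $\int_{S_{t_0}(p)}Hd{\rm A}_g$: one must control $H=\Delta_Mr$, which blows up like $2/s$ as $s\to 0$, against the area element, which collapses like $4\pi s^2$; the density expansion above is the clean way to make this cancellation rigorous, and it is the only step requiring input beyond the propositions already established.
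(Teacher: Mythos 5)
Your proposal is correct and follows the paper's proof essentially verbatim: both start from equation (\ref{eqkvara}), let $t_0\to 0$, identify the double integral with $\int_{B_t(p)}\left({\rm scal}_M-{\rm Ric}(\nabla r,\nabla r)\right)d{\rm V}_g$ via the coarea identity, and reduce the whole matter to showing that the boundary term $\int_{S_{t_0}(p)}H\,d{\rm A}_g$ vanishes in the limit. The only difference is that you justify this last limit self-containedly through the volume-density expansion $J(s,\theta)=s^{2}\left(1+O(s^{2})\right)$, giving ${\rm A}_M'(p,s)=8\pi s+O(s^{3})$, whereas the paper simply cites the asymptotics $H=\frac{n-1}{t}+O(t)$ and ${\rm A}_M(p,t)=C_{n-1}t^{n-1}+O(t^{n+1})$ from \cite{Chen1981}; both encode the same cancellation of the $2/s$ blow-up of $H$ against the $s^{2}$ collapse of the area.
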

\begin{proof}
The Proposition follows taking the limit $t_0\to 0$ in equation (\ref{eqkvara}) because in dimension $3$
$$
\displaystyle \lim_{t_0\to 0}\int_{S_{t_0}(p)}Hd{\rm A}_g=\lim_{t_0\to 0}\int_{S_{t_0}(p)}\Delta_Mr d{\rm A}_g=0.
$$
Indeed, in \cite{Chen1981} for example, it is proved that
$$
H(p)=\frac{n-1}{t}+O(t),\quad {\rm A}_M(p,t)=C_{n-1}t^{n-1}+O(t^{n+1}),\quad {\rm as}\quad t\to 0.
$$\end{proof}
\begin{corollary}\label{constant-curvature}
Let $\mathbb{M}_\kappa^3$ be the $3$-dimensional simply-connected real space form  of constant sectional curvature $\kappa$, then
$$
4\kappa{\rm V}_{\mathbb{M}_\kappa^3}(t)+{\rm V}_{\mathbb{M}_\kappa^3}''(t)=8\pi t
$$
\end{corollary}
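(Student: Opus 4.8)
The plan is to apply Proposition \ref{Gauss-Bonnet} directly to the space form $M=\mathbb{M}_\kappa^3$, after observing that on this manifold the integrand ${\rm scal}_M-{\rm Ric}(\nabla r,\nabla r)$ is a \emph{constant}. Since $\mathbb{M}_\kappa^3$ is homogeneous, I may fix any point $p$ as the center of the geodesic balls. The injectivity-radius restriction $0<t<{\rm inj}(p)$ under which Proposition \ref{Gauss-Bonnet} is stated is satisfied in the whole range where the identity is meaningful: for $\kappa\le 0$ the space form is diffeomorphic to $\mathbb{R}^3$ and has empty cut locus, while for $\kappa>0$ one has ${\rm inj}(p)=\pi/\sqrt{\kappa}$.

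First I would compute the two curvature quantities from the definitions recalled in the preliminaries. In a manifold of constant sectional curvature $\kappa$ every tangent plane $\Pi$ satisfies ${\rm sec}_M(\Pi)=\kappa$. Completing the unit radial vector $\nabla r$ to an orthonormal basis $\{\nabla r,e_2,e_3\}$ of $T_qM$ and using ${\rm Ric}(v,v)=\Vert v\Vert^2\sum_{i=2}^{n}{\rm sec}_M\bigl(\tfrac{v}{\Vert v\Vert},e_i\bigr)$ with $n=3$, I get
$$
{\rm Ric}(\nabla r,\nabla r)=\sum_{i=2}^{3}{\rm sec}_M(\nabla r,e_i)=2\kappa .
$$
Likewise, from ${\rm scal}_M=2\sum_{i<j}{\rm sec}_M(e_i,e_j)$ and the three index pairs in dimension $3$,
$$
{\rm scal}_M=2\cdot 3\cdot \kappa=6\kappa .
$$
Hence ${\rm scal}_M-{\rm Ric}(\nabla r,\nabla r)=6\kappa-2\kappa=4\kappa$, independent of the point.

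Next I would substitute this constant into the statement of Proposition \ref{Gauss-Bonnet}. The volume integral then factors as
$$
\int_{B_t(p)}\bigl({\rm scal}_M-{\rm Ric}(\nabla r,\nabla r)\bigr)\,d{\rm V}_g
=4\kappa\int_{B_t(p)}d{\rm V}_g
=4\kappa\,{\rm V}_{\mathbb{M}_\kappa^3}(t),
$$
and the conclusion $4\kappa\,{\rm V}_{\mathbb{M}_\kappa^3}(t)+{\rm V}_{\mathbb{M}_\kappa^3}''(t)=8\pi t$ is immediate from Proposition \ref{Gauss-Bonnet}.

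There is essentially no serious obstacle here: the corollary is a direct specialization of Proposition \ref{Gauss-Bonnet}, and the only delicate point is making sure the polar-coordinate computation underlying that proposition applies on the entire ball $B_t(p)$. This is guaranteed by the homogeneity of $\mathbb{M}_\kappa^3$ together with the fact that the asserted identity is relevant precisely in the range $t\in(0,{\rm inj}(p))$, namely all $t>0$ when $\kappa\le 0$ and $t<\pi/\sqrt{\kappa}$ when $\kappa>0$.
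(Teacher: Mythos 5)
Your proposal is correct and is exactly the argument the paper intends: Corollary \ref{constant-curvature} is stated as an immediate specialization of Proposition \ref{Gauss-Bonnet}, using that on $\mathbb{M}_\kappa^3$ one has ${\rm scal}_M-{\rm Ric}(\nabla r,\nabla r)=6\kappa-2\kappa=4\kappa$, a constant, so the integral reduces to $4\kappa\,{\rm V}_{\mathbb{M}_\kappa^3}(t)$. Your additional remarks on homogeneity and the injectivity radius ($\mathrm{inj}=\infty$ for $\kappa\le 0$, $\mathrm{inj}=\pi/\sqrt{\kappa}$ for $\kappa>0$) are accurate and only make explicit what the paper leaves implicit.
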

From Proposition \ref{Gauss-Bonnet} we can prove the main results of the paper

\subsection{Proof of Theorem \ref{teo-unua} }The statement and proof of Theorem \ref{teo-unua} is as follows
\begin{theorem*}Let $(M,g)$ be a $3$-dimensional Riemannian manifold. Suppose that
$$
{\rm Ric}\leq 2\kappa.
$$Then, for any $p\in M$ and for any $t\leq \min\{{\rm inj}(p),\pi/\sqrt{\kappa}\}$,
the volume ${\rm V}_M(p,t)$ of the geodesic ball of radius $t$ centered at $p$ is bounded from below by
\begin{equation}
\displaystyle{\rm V}_M(p,t)\geq {\rm V}_{\mathbb{M}_\kappa^3}(t),
\end{equation}
where  ${\rm V}_{\mathbb{M}_\kappa^3}(t)$ is the volume of the geodesic ball of radius $t$ in the simply-connected real space form $\mathbb{M}_\kappa^3$ of dimension $3$ and constant sectional curvature $\kappa$.\end{theorem*}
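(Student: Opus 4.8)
The plan is to transform the geometric statement into a single second-order differential inequality for the volume function $t\mapsto {\rm V}_M(p,t)$ and then integrate it against the explicit model, the only genuine difficulty being to reach the full radius $\pi/\sqrt{\kappa}$ rather than merely the first conjugate radius $\pi/(2\sqrt{\kappa})$.

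First I would extract the pointwise consequence of the hypothesis. Fixing $q\in B_t(p)\setminus\{p\}$ and completing $\nabla r$ to an orthonormal basis $\{\nabla r,e_2,e_3\}$ of $T_qM$, the definition of scalar curvature gives
$$
{\rm scal}_M-{\rm Ric}(\nabla r,\nabla r)={\rm Ric}(e_2,e_2)+{\rm Ric}(e_3,e_3)\leq 4\kappa,
$$
since ${\rm Ric}\leq 2\kappa$ means ${\rm Ric}(v,v)\leq 2\kappa$ for every unit vector $v$. Consequently the function $\eta:=4\kappa-\big({\rm scal}_M-{\rm Ric}(\nabla r,\nabla r)\big)$ is nonnegative on the punctured ball. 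Using Proposition~\ref{Gauss-Bonnet} I would then introduce
$$
\phi(t):={\rm V}_M''(p,t)+4\kappa\,{\rm V}_M(p,t)-8\pi t=\int_{B_t(p)}\eta\, d{\rm V}_g\geq 0 .
$$
Two structural features of $\phi$ are crucial: $\phi(0)=0$, and $\phi$ is \emph{non-decreasing}, because $\eta\geq 0$ is integrated over the nested family $B_t(p)$, so that $\phi'(t)=\int_{S_t(p)}\eta\, d{\rm A}_g\geq 0$.

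Next I would set $G(t):={\rm V}_M(p,t)-{\rm V}_{\mathbb{M}_\kappa^3}(t)$. By Corollary~\ref{constant-curvature} the model satisfies ${\rm V}_{\mathbb{M}_\kappa^3}''+4\kappa\,{\rm V}_{\mathbb{M}_\kappa^3}-8\pi t=0$, so subtracting yields the initial value problem
$$
G''(t)+4\kappa\,G(t)=\phi(t),\qquad G(0)=G'(0)=0 ,
$$
where I use ${\rm V}_M(p,0)={\rm V}_M'(p,0)=0$ together with the corresponding vanishing for the model. For $\kappa\leq 0$ this is immediate: the Duhamel representation of $G$ has a nonnegative kernel (the linear kernel $t-\tau$ when $\kappa=0$, and $\sinh\!\big(2\sqrt{-\kappa}\,(t-\tau)\big)/(2\sqrt{-\kappa})$ when $\kappa<0$), so $G\geq 0$ on $[0,{\rm inj}(p)]$ with no range restriction.

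The main obstacle is the case $\kappa>0$, where the Duhamel kernel
$$
G(t)=\frac{1}{2\sqrt{\kappa}}\int_0^t\sin\!\big(2\sqrt{\kappa}\,(t-\tau)\big)\,\phi(\tau)\, d\tau
$$
changes sign once $t>\pi/(2\sqrt{\kappa})$, so positivity of $\phi$ alone is insufficient; this is exactly the passage beyond the first conjugate radius. Here I would exploit the monotonicity of $\phi$ through an integration by parts. Since $\phi(0)=0$, one obtains
$$
G(t)=\frac{1}{4\kappa}\left(\phi(t)-\int_0^t\cos\!\big(2\sqrt{\kappa}\,(t-\tau)\big)\,d\phi(\tau)\right),
$$
and because $d\phi\geq 0$ while $\cos\leq 1$, the last integral is bounded by $\int_0^t d\phi(\tau)=\phi(t)$. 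Hence $G(t)\geq \tfrac{1}{4\kappa}\big(\phi(t)-\phi(t)\big)=0$ for all admissible $t$, i.e.\ on the entire interval $[0,\pi/\sqrt{\kappa}]$, which completes the proof. I expect the delicate point to be precisely this last step: identifying that the relevant source term is the volume integral $\int_{B_t}\eta\,d{\rm V}_g$, which is automatically monotone, and that this monotonicity (rather than mere positivity) is what pushes the comparison all the way to $\pi/\sqrt{\kappa}$.
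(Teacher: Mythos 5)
Your argument is correct, and up to its midpoint it coincides with the paper's own proof: the same pointwise estimate ${\rm scal}_M-{\rm Ric}(\nabla r,\nabla r)={\rm Ric}(E_1,E_1)+{\rm Ric}(E_2,E_2)\le 4\kappa$ is fed into Proposition~\ref{Gauss-Bonnet} and Corollary~\ref{constant-curvature} to produce $G''+4\kappa G=\phi\ge 0$, $G(0)=G'(0)=0$, which is exactly the paper's inequality $Z''+4\kappa Z\ge 0$ for $Z={\rm V}_M(p,\cdot)-{\rm V}_{\mathbb{M}_\kappa^3}$. The two proofs part ways at the integration step. The paper runs a Sturm--Wronskian comparison against ${\rm sn}_{4\kappa}$: from $(Z''+4\kappa Z)\,{\rm sn}_{4\kappa}\ge 0$ it deduces that $Z'{\rm sn}_{4\kappa}-Z\,{\rm sn}'_{4\kappa}$ is non-decreasing, and then that $Z/{\rm sn}_{4\kappa}\ge 0$. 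You instead solve the ODE exactly by Duhamel and exploit a feature the paper never uses: the source $\phi(t)=\int_{B_t(p)}\eta\,d{\rm V}_g$, with $\eta=4\kappa-\bigl({\rm scal}_M-{\rm Ric}(\nabla r,\nabla r)\bigr)\ge 0$, is \emph{non-decreasing}, being the integral of a fixed nonnegative function over a growing family of balls; one integration by parts then gives $4\kappa\,G(t)=\phi(t)-\int_0^t\cos\bigl(2\sqrt{\kappa}(t-\tau)\bigr)\,d\phi(\tau)\ge 0$. This extra ingredient buys something genuine when $\kappa>0$: both of the paper's key steps require ${\rm sn}_{4\kappa}(t)=\sin(2\sqrt{\kappa}\,t)\ge 0$, hence are valid only for $t\le\pi/(2\sqrt{\kappa})$, so the published argument, as written, establishes the inequality only up to \emph{half} the stated radius $\pi/\sqrt{\kappa}$; your monotonicity argument is insensitive to the sign changes of the oscillatory kernel and covers the whole range $t\le\min\{{\rm inj}(p),\pi/\sqrt{\kappa}\}$ on which the two identities (Proposition~\ref{Gauss-Bonnet} on $M$, Corollary~\ref{constant-curvature} on the model) are in force. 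In short, your route is not merely different but strictly stronger---it repairs a real gap in the paper's proof of the case $\kappa>0$. The only point worth making explicit is the regularity licensing the integration by parts: by the coarea formula $\phi$ is $C^1$ with $\phi'(t)=\int_{S_t(p)}\eta\,d{\rm A}_g\ge 0$, which is exactly the derivative formula you invoked.
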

\begin{proof}
Let $\{\nabla r,E_1,E_2\}$ be an orthonormal basis of $T_qM$. Since ${\rm Ric}\leq 2\kappa $,
$$
{\rm scal}_M-{\rm Ric}(\nabla r,\nabla r)={\rm Ric}(E_1,E_1)+{\rm Ric}(E_2,E_2)\leq 4\kappa
$$
By using Proposition \ref{Gauss-Bonnet} and Corollary \ref{constant-curvature}
\begin{equation}\label{eqKvina}
\begin{aligned}
4\kappa {\rm V}_M(p,t)+{\rm V}_M''(p,t)\geq & \displaystyle\int_{B_t(p)}\left({\rm scal}_M-{\rm Ric}(\nabla r,\nabla r)\right)d{\rm V}_g+{\rm V}_M''(p,t)\\
=&8\pi t =4\kappa{\rm V}_{\mathbb{M}_\kappa^3}(t)+{\rm V}_{\mathbb{M}_\kappa^3}''(t)
\end{aligned}
\end{equation}
Let us denote by $Z(t):= {\rm V}_M(p,t)-{\rm V}_{\mathbb{M}_\kappa^3}(t)$,  and by
$$
{\rm sn}_\kappa(t):=\left\{\begin{array}{lcr}
\sinh(\sqrt{-\kappa}t)&{\rm if}&\kappa<0\\
t&{\rm if}&\kappa=0\\
\sin(\sqrt{\kappa}t)&{\rm if}&\kappa>0
\end{array}\right.
$$
inequality (\ref{eqKvina}) can be rewritten as
$$ 
Z''(t)\geq -4\kappa Z(t)=\frac{{\rm sn}_{4\kappa}
''(t)}{{\rm sn}_{4\kappa}(t)}Z(t)
$$
which implies 
$$
\frac{d}{dt}\left( Z'(t){\rm sn}_{4k}(t)-Z(t){\rm sn}'_{4k}(t)\right)\geq 0
$$
Since $Z'(t){\rm sn}_{4k}(t)-Z(t){\rm sn}'_{4k}(t)$ is a non-decreasing function
$$
Z'(t){\rm sn}_{4k}(t)-Z(t){\rm sn}'_{4k}(t)\geq \lim_{t\to 0}\left(Z'(t){\rm sn}_{4k}(t)-Z(t){\rm sn}'_{4k}(t)\right)=0
$$
Therefore
$$
\frac{d}{dt}\left(\frac{Z(t)}{{\rm sn}_{4k}(t)}\right)\geq 0
$$
then, taking into account that ${\rm V}_{\mathbb{M}_\kappa^3}(t)\sim Ct^3+O(t^4)$, ${\rm V}_{M}(p,t)\sim Ct^3+O(t^4)$, and ${\rm sn}_{4k}(t)\sim t+O(t^3)$ when $t$ tend to zero, 
$$
\frac{Z(t)}{{\rm sn}_{4k}(t)}\geq \lim_{t\to 0}\left(\frac{Z(t)}{{\rm sn}_{4k}(t)}\right)=\lim_{t\to 0}\left(\frac{{\rm V}_M(p,t)}{{\rm sn}_{4k}(t)}-\frac{{\rm V}_{\mathbb{M}_\kappa^3}(t)}{{\rm sn}_{4k}(t)}\right)=0.
$$
Therefore 
$$
\begin{array}{lcr}
Z(t)\geq 0  &\Longrightarrow & {\rm V}_{M}(p,t)\geq {\rm V}_{\mathbb{M}_\kappa^3}(t).
\end{array}
$$\end{proof}
\subsection{Proof of Theorem \ref{teo-dua}}
The statement and proof of Theorem \ref{teo-dua} is as follows
\begin{theorem*}Let $(M,g)$ be a $3$-dimensional Riemannian manifold. Suppose that
$$
\int_MK_+ d{\rm V}_g=C <\infty.
$$Then, for any $p\in M$ and for any $t\leq {\rm inj}(p)$,
the volume ${\rm V}_M(p,t)$ of the geodesic ball of radius $t$ centered at $p$ is bounded from below by
$$
{\rm V}_M(p,t)\geq \frac{4}{3}\pi t^3-Ct^2.
$$
\end{theorem*}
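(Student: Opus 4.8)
The plan is to turn the Gauss--Bonnet identity of Proposition \ref{Gauss-Bonnet} into a one-sided differential inequality for the volume function and then integrate it twice. First I would rewrite that proposition, for $0<t<{\rm inj}(p)$, as
$$
{\rm V}_M''(p,t)=8\pi t-\int_{B_t(p)}\left({\rm scal}_M-{\rm Ric}(\nabla r,\nabla r)\right)d{\rm V}_g,
$$
so that the whole problem reduces to bounding the integrand from above by something controlled by $K_+$.

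For the pointwise bound I would complete $\nabla r$ to an orthonormal basis $\{\nabla r,E_1,E_2\}$ of $T_qM$, exactly as in the proof of Theorem \ref{teo-unua}, to get
$$
{\rm scal}_M-{\rm Ric}(\nabla r,\nabla r)={\rm Ric}(E_1,E_1)+{\rm Ric}(E_2,E_2).
$$
By the very definition of $K_+$, each of the two terms is at most $K_+(q)$, so the integrand is bounded above by $2K_+$ pointwise. Integrating over $B_t(p)$ and using the uniform bound $\int_{B_t(p)}K_+\,d{\rm V}_g\le\int_M K_+\,d{\rm V}_g=C$ then gives
$$
{\rm V}_M''(p,t)\ge 8\pi t-2C.
$$

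The final step is to integrate this inequality twice over $(0,t)$. The two constants of integration vanish thanks to the small-$t$ asymptotics recalled in the proof of Proposition \ref{Gauss-Bonnet}: since ${\rm V}_M'(p,s)={\rm A}_M(p,s)=4\pi s^2+O(s^4)$, both ${\rm V}_M(p,s)$ and ${\rm V}_M'(p,s)$ tend to $0$ as $s\to 0^+$. A first integration yields ${\rm V}_M'(p,t)\ge 4\pi t^2-2Ct$, and a second yields precisely ${\rm V}_M(p,t)\ge\frac{4}{3}\pi t^3-Ct^2$, the factor $2$ in $2C$ being halved by the second integration. Because the estimate holds for every $t<{\rm inj}(p)$ and ${\rm V}_M(p,\cdot)$ is continuous, it then extends to the endpoint $t={\rm inj}(p)$.

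I do not expect a genuine obstacle here; the argument is essentially a double integration of a differential inequality. The two points that require care are that the bound $\int_{B_t(p)}K_+\le C$ is uniform in $t$, which is exactly what the finiteness of the $L_1$-norm hypothesis provides and what makes the crude pointwise estimate usable, and that the vanishing of the two integration constants is justified via Chen's expansion rather than merely assumed.
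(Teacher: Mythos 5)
Your proposal is correct and follows the paper's own proof essentially verbatim: the same orthonormal-basis identity ${\rm scal}_M-{\rm Ric}(\nabla r,\nabla r)={\rm Ric}(E_1,E_1)+{\rm Ric}(E_2,E_2)\leq 2K_+$, the same application of Proposition \ref{Gauss-Bonnet} to get ${\rm V}_M''(p,t)\geq 8\pi t-2C$, and the same double integration with vanishing initial data. Your explicit justification of the integration constants via Chen's expansion and the continuity argument at $t={\rm inj}(p)$ are slightly more careful than the paper, which simply asserts ${\rm V}_M(p,0)={\rm V}_M'(p,0)=0$.
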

\begin{proof}
Let $\{\nabla r,E_1,E_2\}$ be an orthonormal basis of $T_qM$. Since ${\rm Ric}\leq K_+ $,
$$
{\rm scal}_M-{\rm Ric}(\nabla r,\nabla r)={\rm Ric}(E_1,E_1)+{\rm Ric}(E_2,E_2)\leq 2K_+(q)
$$
By using Proposition \ref{Gauss-Bonnet}

$$
\begin{aligned}
V_M''(p,t)=&8\pi t-\displaystyle\int_{B_t(p)}\left({\rm scal}_M-{\rm Ric}(\nabla r,\nabla r)\right)d{\rm V}_g\\
\geq & 8\pi t-2\displaystyle\int_{M}K_+ d{\rm V}_g=8\pi t-2C
\end{aligned}
$$
and the Theorem follows integrating twice and taking into account that ${\rm V}_M(p,0)={\rm V}_M'(p,0)=0$,
$$
{\rm V}_M(p,t)\geq \frac{4}{3}\pi t^3-Ct^2.
$$
\end{proof}

\def\cprime{$'$} \def\polhk#1{\setbox0=\hbox{#1}{\ooalign{\hidewidth
  \lower1.5ex\hbox{`}\hidewidth\crcr\unhbox0}}}
  \def\polhk#1{\setbox0=\hbox{#1}{\ooalign{\hidewidth
  \lower1.5ex\hbox{`}\hidewidth\crcr\unhbox0}}}
  \def\polhk#1{\setbox0=\hbox{#1}{\ooalign{\hidewidth
  \lower1.5ex\hbox{`}\hidewidth\crcr\unhbox0}}} \def\cprime{$'$}
  \def\cprime{$'$} \def\cprime{$'$} \def\cprime{$'$} \def\cprime{$'$}

\end{document}